\documentclass[12pt]{amsart}
\usepackage[utf8]{inputenc}

\usepackage[margin=1.5in]{geometry}
\usepackage{amsmath,amssymb,amsthm}
\usepackage{dsfont}
\usepackage{tikz}
    \usetikzlibrary{cd}
%\usepackage{tikz-cd}

% Commands in commands.tex

%%%%%%%%%%%% Theorems
\theoremstyle{plain}
\newtheorem{theorem}{{Theorem}}[section]

\newtheorem{definition}[theorem]{{Definition}}

% COMMON RINGS

\newcommand{\Z}{\mathbb{Z}}
\newcommand{\F}{\mathbb{F}} % fields in topology
 % algebra 

%%%% LINK HOMOLOGIES

\newcommand{\HFKhat}{\widehat{\mathrm{HFK}}}

%%%% BRAIDS

\usepackage{hyperref}
\hypersetup{
    colorlinks=true,
    linkcolor=purple,
    citecolor=teal,      
    urlcolor=purple,
}

\title{A note on knot Floer thickness and the dealternating number}
\author{Linh Truong}

\begin{document}
\begin{abstract}
In this note, we give a short proof that knot Floer thickness is a lower bound on the dealternating number of a knot.  The result is originally due to work of Abe and Kishimoto, Lowrance, and Turaev. Our proof is a modification of the Stipsicz-Szab\'o approach using Kauffman states to show that thickness bounds the minimal number of bad domains in a knot diagram. 
\end{abstract}
\maketitle

\section{Introduction}
Alternating knots are knots which admit a diagram in which the crossings alternate between over-passes and under-passes. An intrinsic definition of alternating (i.e. independent of a diagrammatic definition) was given by Greene and Howie \cite{Greene, Howie}. Alternating knots include any $T(2,n)$ torus knot and all prime knots of seven or fewer crossings. Several invariants can obstruct a knot from being alternating, such as degree of the Alexander polynomial \cite{Crowell, Murasugi}, the thickness of knot Floer homology \cite{OS-alternating}, thickness of Khovanov homology \cite{Lee}, and the Heegaard Floer homology of its double branched cover \cite{OS-dbc}. Another obstruction to being alternating is the dealternating number, introduced in \cite{Adamsetal}. 

\begin{definition}
The dealternating number of a knot $K$, denoted $\text{dalt}(K)$, is the minimum number of crossing changes to turn a diagram for $K$ into an alternating diagram. 
\end{definition}

Observe that if a knot $K$ is alternating, then the dealternating number of $K$ vanishes.  Note that if $K$ has crossing number $n$, then $0 \leq \text{dalt}(K) \leq n/2$. 

Recently, Stipsicz-Szabo \cite{StipsiczSzabo} show that the $\delta$-thickness  $\text{th}(K) $ of knot Floer homology provides a lower bound on $\beta(K)$, the minimal number of bad domains in a diagram for a knot $K$. A bad domain is one in which there exists an edge whose two vertices are both under-passes or both over-passes (as opposed to one over- and one under-pass). The invariant $\beta(K)$ is a measure of how far a knot is from being alternating, since if $K$ is alternating, then $\beta(K) = 0$. Their proof relies on studying the knot Floer $\delta$-gradings of Kauffman states in a knot diagram, and a similar strategy can be leveraged to show the following theorem. 

\begin{theorem} \label{thm:dalt}
$\text{th}(K) \leq  \text{dalt}(K)$. 
\end{theorem}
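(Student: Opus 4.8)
The plan is to run Ozsv\'ath and Szab\'o's Kauffman state count on a diagram of $K$ chosen to lie as close to alternating as $\text{dalt}(K)$ permits, and to track how the $\delta$-grading of the states moves as the crossing changes are undone. Fix a diagram $D$ of $K$ realizing the dealternating number: there is a set $S$ of $c=\text{dalt}(K)$ crossings such that changing all of them turns $D$ into an alternating diagram $D'$ of some auxiliary knot $K'$ (the knot type of $K'$ will play no role). Pick a marked edge on the shadow of $D$. By the state-sum description of knot Floer homology, $\HFKhat(K)$ is the homology of a complex freely generated over $\F$ by the Kauffman states of $D$, a state $x$ carrying Maslov grading $M(x)$ and Alexander grading $A(x)$, each a sum of local contributions over the crossings of $D$. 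Since $\HFKhat(K)$ is a subquotient of this bigraded group, every homogeneous generator of $\HFKhat(K)$ has $\delta$-grading $\delta_D(x):=M(x)-A(x)$ for some state $x$, so
\[
\text{th}(K)\;\le\;\max_{x}\delta_D(x)-\min_{x}\delta_D(x),
\]
the extrema running over the Kauffman states of $D$. It therefore suffices to bound this spread by $c$.

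For the comparison, note first that the Kauffman states of $D$ and of $D'$ form one and the same set, since a Kauffman state depends only on the shadow and the marked edge, which $D$ and $D'$ share. Thus $\delta_D$ and $\delta_{D'}$ are two functions on a single finite set, and for a fixed state $x$ the local contributions at a crossing $v\notin S$ agree for $D$ and $D'$, whence
\[
\delta_D(x)-\delta_{D'}(x)\;=\;\sum_{v\in S}\bigl(\delta_v^{+}(x)-\delta_v^{-}(x)\bigr),
\]
where $\delta_v^{\pm}(x)$ denotes the local $\delta$-contribution at $v$ computed as if $v$ were a positive, resp.\ negative, crossing, with the two strand orientations at $v$ and the quadrant occupied by $x$ at $v$ held fixed. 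The crux is a finite inspection of the Ozsv\'ath--Szab\'o tables of local Maslov and Alexander contributions: for every crossing $v$, as $x$ ranges over the four quadrants at $v$, the number $\delta_v^{+}(x)-\delta_v^{-}(x)$ stays within an interval of length $1$; that is, a single crossing change shifts the $\delta$-grading of every Kauffman state by an amount lying in a window of width one depending only on $v$. Granting this, together with the standard fact \cite{OS-alternating, StipsiczSzabo} that an alternating diagram places all of its Kauffman states in one $\delta$-grading $\delta_0$, we obtain $\delta_D(x)=\delta_0+\sum_{v\in S}\eta_v(x)$ with each $\eta_v(x)$ confined to an interval of length $1$ depending only on $v$. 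Bounding the maximum of a sum by the sum of the maxima, and dually the minimum,
\[
\max_{x}\delta_D(x)-\min_{x}\delta_D(x)\;\le\;\sum_{v\in S}1\;=\;c\;=\;\text{dalt}(K),
\]
which is exactly the bound we needed.

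The main obstacle is this local check: verifying from the Maslov/Alexander state-sum tables that a single crossing change shifts the local $\delta$-contribution within a window of width one, uniformly over the four quadrants and over both local orientations, and recalling (or re-deriving from the same tables, via the thinness of $\HFKhat$ for alternating knots and a count of Kauffman states of an alternating diagram) that an alternating diagram assigns a single $\delta$-grading to all of its Kauffman states. Two further points need care but no ingenuity. First, the basepoint bookkeeping: the state sum depends on the marked edge and on the two regions receiving no state marker, but $D$ and $D'$ share their shadow, so a single marked edge serves both, and any normalization term that is not a sum of crossing-local contributions is then identical for $D$ and $D'$ and cancels in $\delta_D(x)-\delta_{D'}(x)$. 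Second, no compatibility of orientations or knot types between $D$ and $D'$ is required: $D'$ enters only as a combinatorial object whose sole relevant property is that it is alternating, hence assigns a single $\delta$-grading to all of its Kauffman states.
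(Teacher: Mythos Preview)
Your proof is correct and follows the same Kauffman-state strategy as the paper: bound the $\delta$-spread of the states on a diagram realizing $\text{dalt}(K)$. The bookkeeping, however, is organized differently. You sum over the fixable \emph{crossings}, checking from the local tables that a single crossing change moves the local $\delta$-contribution within a window of width~$1$ (indeed, the four quadrant values change from $(\tfrac{1}{2},\tfrac{1}{2},0,0)$ to $(-\tfrac{1}{2},-\tfrac{1}{2},0,0)$ or vice versa), and then invoke as a black box that an alternating diagram places all of its Kauffman states in a single $\delta$-grading. The paper instead sums over \emph{domains}, using the Stipsicz--Szab\'o observation that in a good domain every corner carries the same $f$-value $a_D\in\{\pm\tfrac{1}{4}\}$; hence in the original diagram the static corners of $D$ carry $a_D$ and the fixable corners carry $-a_D$, so two states $x,y$ can differ at $D$ only when exactly one of them assigns a fixable crossing there, and at most $n$ domains do so for each state. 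Your route is a bit more modular (the alternating case enters as a cited fact), while the paper's domain argument is self-contained and in effect re-derives that fact locally; the two are equivalent repackagings of the same estimate.
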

We remark that an alternative proof of this theorem already exists: Lowrance \cite{Lowrance} shows the bound $\text{th}(K) \leq g_T(K)$, where $g_T(K)$ denotes the Turaev genus of $K$. Furthermore, Abe and Kishimoto \cite{AbeKishimoto} show  $g_T(K) \leq \text{dalt}(K)$ by using work of Lowrance \cite{Lowrance} and Turaev \cite{Turaev}. We provide a shorter proof of this result, without referring to the Turaev genus, by modifying the strategy of \cite{StipsiczSzabo}.

An analogous result showing the thickness of Khovanov homology bounds the dealternating number is due to Asaeda and Przytycki \cite{AsaedaPrzytycki} (and was reproven by Champanerkar-Kofman in \cite{ChampanerkarKofman}). 

The dealternating number of a knot is difficult to compute, and the calculation for torus knot of most braid indices remains open. With Turner's calculation of the thickness of Khovanov homology for torus knots of braid index three \cite{Turner}, Abe and Kishimoto \cite{AbeKishimoto} determine the dealternating number for all $(3,q)$-torus knots. For torus knots of braid index 5 or fewer, the dealternating number is computed up to an error of at most two \cite{Lowranceetal}. 

\subsection{Acknowledgements}
The author thanks Andr\'as Stipsicz for helpful comments. The author was partially supported by NSF grant DMS-2104309. 

\section{Knot Floer thickness}
Let $K \subset S^3$ be a knot. The hat version of knot Floer homology $\HFKhat(K) $ over $\F = \F_2$ is knot invariant \cite{OS-knot} which is a finite-dimensional bigraded vector space $\HFKhat(K) = \sum_{M, A \in \Z}\HFKhat_M(K, A) $. Here, $M$ denotes the Maslov grading and $A$ denotes the Alexander grading. We can collapse the two gradings into a single grading via $\delta = M -A$. This produces the $\delta$-graded invariant $\HFKhat(K) = \sum_\delta \HFKhat^\delta(K) $, where $\HFKhat^\delta(K) \subset \HFKhat(K)$ is the subspace of homogeneous elements of grading $\delta \in \Z$. The (knot Floer) thickness of $K$ is the thickness of this finite dimensional $\delta$-graded vector space  $\HFKhat(K)$, which by definition is the largest possible difference between $\delta$-gradings of two non-zero homogeneous elements:
\[\text{th}(K) = \max\{ a \in \Z \mid \HFKhat^a(K) \neq 0 \} - \min\{ a \in \Z \mid \HFKhat^a(K) \neq 0 \} . \]

Given a knot diagram $\mathcal{D}$ for $K$, we can define knot Floer homology as the homology of a chain complex, whose construction \cite{OS-alternating} we recall here. Choose an edge $e$ in the diagram $\mathcal{D}$. Associated to the marked diagram $(\mathcal{D}, e)$, the chain complex $C_{\mathcal{D}, e}$ has an underlying $(M,A)$-bigraded vector space  generated over $\F$ by the Kauffman states (described in the next paragraph) of $(\mathcal{D}, e)$. By \cite{OS-alternating}, there exists a boundary map $\partial: C_{\mathcal{D}, e} \to C_{\mathcal{D}, e}$ of $(M,A)$-bidegree $(-1,0)$ with the property that the homology $H_*(C_{\mathcal{D}, e}, \partial)$ is isomorphic to the knot Floer homology $\HFKhat(K)$ of $K$ \cite{OS-knot} (as a bigraded vector space).

Let $\text{cr}(\mathcal{D})$ denote the set of crossings in the diagram $\mathcal{D}$ for the knot $K$. Denote the set of domains which do not contain the edge $e$ on their boundary by $Dom_e(\mathcal{D})$. A Kauffman state $x$ is a bijection $x: \text{cr}(\mathcal{D}) \to Dom_e(\mathcal{D})$ such that for each crossing $c \in \text{cr}(\mathcal{D})$, the domain $x(c)$ is one of the (at most four) domains meeting at $c$. Equivalently, a Kauffman state is a choice of a corner in each domain of the diagram $\mathcal{D}$ that is not adjacent to the special marked edge $e$, such that no two corners belong to the same crossing in $\text{cr}(\mathcal{D})$.  Given a Kauffman state $x$, we will denote the crossing associated to the domain $x(c)$ by $c_x$. 
The Maslov, Alexander, and $\delta$ gradings of a Kauffman state are computed by adding the local contributions at each crossing; see Figure~\ref{fig:gradings} for the $\delta$-grading. 
\begin{figure}
\begin{tikzpicture} 
\node[](s1){};
\node[](s2) at (2cm,0){};
\node[](n1) at (0,2cm){};
\node[](n2) at (2cm,2cm){};
\node[](center) at (1cm,1cm){};
\draw[->] (s1) to  (n2);
\draw[-] (s2) to  (center);
\draw[->] (center) to  (n1);
\node[above] at (center.north){$\frac{1}{2}$};
\node[below] at (center.south){$\frac{1}{2}$};
\node[right] at (center.east){\ $0$ \ };
\node[left] at (center.west){\ $0$ \ };
\end{tikzpicture}
\quad 
\begin{tikzpicture}
\node[](s1){};
\node[](s2) at (2cm,0){};
\node[](n1) at (0,2cm){};
\node[](n2) at (2cm,2cm){};
\node[](center) at (1cm,1cm){};
\draw[->] (s2) to  (n1);
\draw[-] (s1) to  (center);
\draw[->] (center) to  (n2);
\node[above] at (center.north){$-\frac{1}{2}$};
\node[below] at (center.south){$-\frac{1}{2}$};
\node[right] at (center.east){\ $0$ \ };
\node[left] at (center.west){\ $0$ \ };
\end{tikzpicture}
\caption{Local contribution to the $\delta$-grading}
Every Kauffman state $x$ associates to each crossing a domain, which contributes to the $\delta$-grading of $x$ according to this illustration. 
 \label{fig:gradings}
\end{figure}

\begin{proof}[Proof of Theorem \ref{thm:dalt}] Suppose $\mathcal{D}$ is a non-alternating diagram for the knot $K$.
Let $\text{cr}(\mathcal{D})$ denote the set of crossings of $\mathcal{D}$. Choose a special marked edge $e$ in the diagram $\mathcal{D}$ with respect to which the Kauffman states will be defined.

As observed in the proof of \cite[Proposition 2.1]{StipsiczSzabo}, the $\delta$-grading of a Kauffman state $x$ is given by
\[ \delta(x) = - \frac{1}{4} \text{wr}(\mathcal{D}) + \sum_{c \in \text{cr}(\mathcal{D})} f(c_x)\]
where $c_x$ denotes the marked corner (determined by $x$) of the crossing $c$, and $f(c_x) \in \{ \frac{1}{4}, -\frac{1}{4}\}$. Here, $\text{wr}(\mathcal{D})$ denotes the writhe of the diagram. Note that $\sum_{c \in \text{cr}(\mathcal{D})} f(c_x)$ is expressed as a sum of $f$-values over crossings in the diagram $\mathcal{D}$, but via the bijection $x$, we will also find it useful below to view this as a sum of $f$-values over all domains in $Dom_e(\mathcal{D})$. 

Choose a set of $n$ crossings $c_1, \dots, c_n$ in the diagram $\mathcal{D}$ such that after changing these crossings, the new diagram is alternating. We call each $c_i$ a fixable crossing. (Note that there is more than one choice of a set of fixable crossings for a diagram $\mathcal{D}$.) 
Call the remaining crossings in $\text{cr}(\mathcal{D}) \setminus \{c_1, \dots, c_n\}$ the static crossings. 

Let $x$ be a Kauffman state of $\mathcal{D}$. 
Note that given a domain $D \in Dom_e(\mathcal{D})$, the domain $D$ has exactly one corner marked (chosen) by $x$. 

We call a domain good if it is not bad. After changing all the fixable crossings, each domain will be good. In a good domain, all corners have the same $f$-value $a_D  \in \{ \frac{1}{4}, -\frac{1}{4}\}$, where $a_D$ depends on the domain $D$ \cite{StipsiczSzabo}. Also, note that after changing a crossing, the $f$-value at that crossing is multiplied by $-1$. Thus, given a domain $D$ in the original diagram $\mathcal{D}$, the static corners all have the same $f$-value $a_D$, whereas the fixable corners all have the same $f$-value $-a_D$. 

Let $x$ and $y$ be Kauffman states in the diagram $\mathcal{D}$. 
Let $D\in Dom_e(\mathcal{D})$ be a domain which contributes to the $\delta$-gradings $\delta(x)$ and $\delta(y)$ (that is, $D$ is not adjacent to the special marked edge $e$). The contributions to $\delta(x)$ and $\delta(y)$ from the writhe are the same, and we are interested in the difference
 \[ | \delta(x) - \delta(y) | = | \sum_{c \in \text{cr}(\mathcal{D})} f(c_x) - \sum_{c \in \text{cr}(\mathcal{D})} f(c_y) | .
 \]
We can view each summation on the right-hand side as a sum over domains $D \in Dom_e(\mathcal{D})$ via the bijections with $\text{cr}(\mathcal{D})$ given by $x$ and $y$.
Let $c_x$, respectively $c_y$, denote the crossing associated to the domain $D$ by $x$, respectively $y$. The domain $D$  falls into exactly one of four categories:
\begin{enumerate}
\item $c_x \in D$ is a static crossing, $c_y \in D$ is a fixable crossing; 
\item $c_x \in D$ is a fixable crossing, $c_y \in D$ is a static crossing;
\item $c_x \in D$ and $c_y \in D$ are both fixable crossings;
\item $c_x \in D$ and $c_y \in D$ are both static crossings.
\end{enumerate}
In cases (1) and (2), the $f$-values at this domain $D$ for $x$ and $y$ differ by at most $\frac{1}{2}$.  The maximum number of possible domains of the diagram $\mathcal{D}$ that fall into case (1) is bounded above by the number of fixable crossings $n$. Similarly, the number of possible domains in case (2) is bounded above by $n$. In cases (3) and (4), the contributions to the $\delta$-grading of $x$ and $y$ are the same for the domain $D$. Thus, summing over all domains in $Dom_e(\mathcal{D})$, we see
\[ | \delta(x) - \delta(y) | \leq  \frac{1}{2} n + \frac{1}{2} n + 0  + 0  = n.  \]
If we take the maximum of the left-hand side of this inequality over all pairs of homogeneous elements $x, y \in C_{\mathcal{D},e}^\delta$, we see immediately $\text{th}(K) \leq n$.
Taking the minimum $n$, over all choices of a set of fixable crossings in a diagram and over all diagrams for $K$, gives the bound  $\text{th}(K)  \leq \text{dalt}(K)$. 
\end{proof}

%\begin{question} %see discussion in \cite{StipsiczSzabo}
%Behavior of $\text{dalt}(K)$ under connected sums? 
%\end{question} 

%\begin{question}
%Is there a relationship between $\text{dalt}(K)$ and $\beta(K)$? 
%\end{question}

\bibliographystyle{amsalpha}
\bibliography{thickness}

\end{document}